   \def\MR#1{}
\newcommand{\g}{\mathfrak{g}}
\newcommand{\h}{\mathfrak{h}}
\newcommand{\m}{\mathfrak{m}}
\newcommand{\Span}{\mathrm{Span}}
\newcommand{\ad}{\mathrm{ad}}
\def\z2{\mathbb{Z}_2}
\def\^{\wedge}
\def\w{\omega}
\def\F{\mathbb{F}}
\newtheorem*{rep@theorem}{\rep@title}
\newcommand{\newreptheorem}[2]{%
\newenvironment{rep#1}[1]{%
 \def\rep@title{#2 \ref{##1}}%
 \begin{rep@theorem}}%
 {\end{rep@theorem}}}
\theoremstyle{plain}
\newtheorem{theorem}{Theorem}
\newtheorem{proposition}{Proposition}
\newtheorem{lemma}{Lemma}
\newtheorem*{theorem*}{Theorem}
\newtheorem*{corollary*}{Corollary}
\theoremstyle{definition}
\newtheorem{definition}{Definition}
\newtheorem*{definition*}{Definition}
\theoremstyle{remark}
\newtheorem*{example*}{Example}
\newtheorem*{exercise*}{Exercise}
\begin{document}
\title[Filiform Lie Algebras]{On the Betti numbers of filiform Lie algebras over fields of characteristic two}

\author{Ioannis Tsartsaflis}
\address{Department of Mathematics and Statistics, La Trobe University, Melbourne, Australia
3086}
\email{J.Tsartsaflis@latrobe.edu.au, John@purebyteslab.com}

\keywords{filiform algebras, Lie algebras of maximal class, cohomology, characteristic 2, nilpotent Lie algebras}
\begin{abstract}
An $n$-dimensional Lie algebra $\g$ over a field $\F$ of characteristic two is said to be of \emph{Vergne type} if there is a basis $e_1,\dots,e_n$ such that $[e_1,e_i]=e_{i+1}$ for all $2\leq i \leq n-1$ and $[e_i,e_j] = c_{i,j}e_{i+j}$ for some $c_{i,j} \in \F$ for all $i,j \ge 2$ with $i+j \le n$. 
We define the algebra $\mathfrak{m}_0$ by its nontrivial bracket relations: $[e_1,e_i]=e_{i+1}, 2\leq i \leq n-1$, and the algebra 
$\mathfrak{m}_2$: $[e_1, e_i ]=e_{i+1}, 2 \le i \le n-1$,
$[e_2, e_j ]=e_{j+2}, 3 \le j \le n-2$.

We show that, in contrast to the corresponding real and complex cases, $\m_0(n)$ and $\m_2(n)$ have the same Betti numbers. We also prove that for any Lie algebra of Vergne type of dimension at least $5$, there exists a non-isomorphic algebra of Vergne type with the same Betti numbers.
\end{abstract}

\maketitle

\section{Introduction}
Throughout this paper, $\F$ denotes a field of characteristic two and all cohomologies are taken with trivial coefficients. In this paper, we study a particular class of nilpotent Lie algebras over $\F$.

\begin{definition}\label{def:filiform}
Let $\g$ be an $n$-dimensional nilpotent Lie algebra over $\F$. If there is an element $x \in \g$ such that $\ad^{n-2}(x) \ne 0$, then we say that  $\g$ is \emph{filiform}.
\end{definition}

\begin{definition}\label{def:vergnetype}
Let $\g$ be an $n$-dimensional filiform Lie algebra over $\F$. We say that $\g$ is of \emph{Vergne type} if there is a basis $e_1,\dots,e_n$ for $\g$ such that $[e_1,e_i]=e_{i+1}$ for all $2\leq i \leq n-1$ and $[e_i,e_j] = c_{i,j}e_{i+j}$ for some $c_{i,j} \in \F$ for all $i,j \ge 2$ with $i+j \le n$. If $\g$ is of Vergne type, we call such a basis a \emph{Vergne basis}.
\end{definition}
Lie algebras of Vergne type are filiform Lie algebras; if $e_1,\dots,e_n$ is a Vergne basis for a Lie algebra $\g$, then $e_1$ is an element of maximal rank.

We define $\m_0(n)=\Span(e_1,\dots,e_n)$ to be the Lie algebra with nonzero bracket relations $[e_1,e_i]=e_{i+1}$ for $2\leq i \leq n-1$, and $\m_2(n)=\Span(e_1,\dots,e_n)$ the Lie algebra with nonzero bracket relations $[e_1,e_i]=e_{i+1},$ for $2 \le i \le n-1$ and $[e_2,e_j]=e_{j+2}$ for $3 \le j \le n-2$. We keep the notation $\m_0$ and $\m_2$ for these algebras as introduced in the classification of A. Fialowski in \cite{Fial}.

Lie algebras of Vergne type are Lie algebras of maximal class, in the sense that they have maximal nilpotency. Infinite Lie algebras of maximal class over fields of characteristic two have been studied in \cite{max1,max2,max3}. The cohomology of Lie algebras of maximal class have been studied extensively over a field of characteristic zero \cite{D, Fial, FM, Vergne}.
In \cite{FM} the authors give a full description of the cohomology with trivial coefficients of the infinite analogues of $\m_0(n)$ and $\m_2(n)$. The finite dimensional case is also discussed \cite{FM}, for which we know the full description of the cohomology of $\m_0(n)$. However, for $\m_2(n)$ our knowledge is limited to the first few Betti numbers and it is known \cite{million} that over a field of characteristic zero, these two Lie algebras have different Betti numbers in dimension $n \ge 5$. Over a field of characteristic two we have the following theorem:

\begin{theorem}\label{m0m2}
For $n \in \mathbb{N}$, the Lie algebras $\m_0(n)$ and $\m_2(n)$ over $\F$ have the same Betti numbers.
\end{theorem}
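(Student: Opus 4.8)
The plan is to work throughout with the Chevalley--Eilenberg cochain complex $(\Lambda\g^*,d)$, where $\g$ is either $\m_0(n)$ or $\m_2(n)$ and $e^1,\dots,e^n$ is the basis dual to a Vergne basis, so that the Betti numbers are the dimensions $\dim H^k(\g,\F)$. Assigning weight $i$ to $e_i$ (equivalently to $e^i$) turns each bracket into a weight-$0$ operation, so the differential preserves the induced weight grading on $\Lambda\g^*$; consequently $H^*(\g,\F)=\bigoplus_w H^*_w(\g,\F)$, and it suffices to match Betti numbers in each fixed weight, reducing every step to finite-dimensional linear algebra. On generators the two differentials read $d_0 e^k=e^1\wedge e^{k-1}$ for $\m_0$, and $d_2 e^k=e^1\wedge e^{k-1}+e^2\wedge e^{k-2}$ for $\m_2$ (with the obvious truncations for small $k$). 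Writing $d_2=d_0+d'$, where $d' e^k=e^2\wedge e^{k-2}$, a short derivation check gives $(d')^2=0$ and, since $d_0^2=d_2^2=0$, the identity $d_0d'+d'd_0=0$; over $\F$ this says $d_0$ and $d'$ \emph{commute}, so $(\Lambda\g^*,d_0,d')$ is a bicomplex with total differential $d_2$. I would stress here the conceptual source of $d'$: in $\m_2$ one has $\ad(e_2)=\ad(e_1)^2$ on $\Span(e_3,\dots,e_{n-2})$, so $d'$ is a genuinely characteristic-two, ``squaring'' perturbation of $d_0$.

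Since the full cohomology of $\m_0(n)$ is known, the goal becomes to prove $\dim H^k_w(\m_2)=\dim H^k_w(\m_0)$ for all $k,w$. My primary approach is to exhibit a weight-preserving linear automorphism $\Phi$ of $\Lambda\g^*$ intertwining the two differentials, $d_0\Phi=\Phi d_2$; any such $\Phi$ is automatically an isomorphism of complexes respecting the weight grading, and hence forces equality of all the $H^k_w$, not merely of the total Betti numbers. I would build $\Phi=\mathrm{id}+N$ with $N$ strictly increasing the number of $e^2$-factors (hence nilpotent, so $\Phi$ invertible), solving the intertwining relation order by order in this filtration. The leading-order equation is $d_0N+Nd_0=d'$, i.e.\ $d'$ must be a coboundary for the operator $[d_0,-]$ on $\mathrm{End}(\Lambda\g^*)$, with the higher-order corrections $Nd'$ absorbed successively; this is a homological-perturbation computation carried out weightwise.

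\textbf{The main obstacle} is precisely the existence of $\Phi$: producing $N$ with $[d_0,N]=d'$ and checking that all higher obstructions vanish. This cannot be a formality, because over fields of characteristic zero $\m_0(n)$ and $\m_2(n)$ have different Betti numbers for $n\ge 5$, so no such intertwiner exists there; the construction must therefore use characteristic two in an essential way, through the vanishing of the cross-terms recorded by $d_0d'+d'd_0=0$ (a relation that acquires a nonzero right-hand side once $2\neq 0$) and through the identity $\ad(e_2)=\ad(e_1)^2$. Should a closed form for $\Phi$ prove elusive, I would fall back on the bicomplex spectral sequence: filtering by the number of $e^1$-factors (which takes only the values $0,1$) makes the page-zero differential equal to $d'$, so $E_1=H(\Lambda\g^*,d')$ carries an induced differential $\bar d_0$ and the sequence collapses at $E_2=E_\infty$; I would then compute $H(\Lambda\g^*,d')$ explicitly and compare its dimensions, weight by weight, with the known answer for $\m_0(n)$, the characteristic-two cancellations again being the crux.
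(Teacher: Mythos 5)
Your primary route is exactly the strategy of the paper --- the paper proves Theorem \ref{m0m2} by exhibiting precisely the kind of map you postulate, a degree-preserving automorphism $f=\mathrm{id}+N$ of $\Lambda^*(e^1,\dots,e^n)$ with $N$ nilpotent and $f d_2=d_0 f$ (Definition \ref{def:involution} and Proposition \ref{p:m2m0}) --- but your proposal stops short of a proof at the very point you yourself label ``the main obstacle'': you never produce $N$, never solve $[d_0,N]=d'$, and never show the higher obstructions vanish. Since, as you correctly observe, no such intertwiner can exist in characteristic zero, its existence \emph{is} the mathematical content of the theorem; a perturbative scheme that remains to be solved is a plan, not an argument. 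Your fallback does not repair this: it presumes ``the known answer for $\m_0(n)$,'' but over characteristic two the cohomology of $\m_0(n)$ is \emph{not} known in closed form (the paper itself notes that beyond $b_1$ and $b_2$ even the computation of $b_3$ is technically involved and deferred); only the characteristic-zero description is in the literature, and it is useless here.

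For the record, the missing construction is short, and your scheme terminates at first order. Writing $h=e^1\wedge x+e^2\wedge y+z$ uniquely with $x\in\Lambda^{k-1}(e^2,\dots,e^n)$, $y\in\Lambda^{k-1}(e^3,\dots,e^n)$, $z\in\Lambda^{k}(e^3,\dots,e^n)$, the paper takes
\[
N(h)=e^2\wedge D_1(x),\qquad f=\mathrm{id}+N,
\]
where $D_1$ is the derivation with $D_1(e^i)=e^{i-1}$ for $i\ge 3$ and $D_1(e^1)=D_1(e^2)=0$. The image of $N$ contains no $e^1$-factor and $N$ kills forms without $e^1$-factor, so $N^2=0$ and $f$ is an involution; moreover $Nd'=0$, because the $e^1$-component of $d'(h)$ is $e^2\wedge D_2(x)$ and $D_1\bigl(e^2\wedge D_2(x)\bigr)=e^2\wedge D_1D_2(x)$ dies against the outer $e^2$. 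Hence the intertwining relation collapses to the single leading-order equation $d_0N+Nd_0=d'$, which on computing both sides reduces to the identity $e^2\wedge D_2(\omega)=e^2\wedge D_1^{2}(\omega)$ for all $\omega$ --- the cochain-level form of your observation $\ad(e_2)=\ad(e_1)^2$, valid because the only generator where $D_2$ and $D_1^2$ disagree is $e^4$, where they differ by $e^2$, which the exterior $e^2$ annihilates. This is exactly the paper's Lemma \ref{d1dstar} together with Proposition \ref{p:m2m0}; had you carried out your own first-order computation, you would have landed on it.
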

In Section \ref{sect:general}, we will prove the following theorem:
\begin{theorem}\label{t:second}
For any nilpotent Lie algebra of Vergne type of dimension at least $5$ over $\F$, there exists a non-isomorphic Lie algebra of Vergne type having the same Betti numbers.
\end{theorem}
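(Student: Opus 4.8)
The plan is to prove Theorem~\ref{t:second} by exhibiting, for each Vergne-type algebra $\g$ of dimension $n\ge 5$, an explicit non-isomorphic Vergne-type algebra with matching Betti numbers. The cohomology of these algebras is computed from the Chevalley--Eilenberg complex $\Lambda^\bullet\g^* $ with the differential $d$ dual to the bracket; over $\F$ of characteristic two all signs disappear, so $d$ acts on a basis monomial $e^{i_1}\wedge\cdots\wedge e^{i_k}$ by summing, over each factor, its replacement by the dual of the brackets producing that index. Because every structure constant in a Vergne basis is attached to the single target $e_{i+j}$, the differential respects the grading by total weight $\sum i_j$, and the Betti numbers decompose as a sum over weight-graded pieces. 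I would first set up this weight-graded complex carefully and record that $b_k(\g)=\dim H^k(\g)=\sum_w \dim H^k_w(\g)$, so it suffices to match cohomology weight-by-weight.

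The key mechanism I would exploit is a \emph{change of scale} on the basis. Given a Vergne-type algebra with constants $c_{i,j}$, rescaling $e_i\mapsto \la^{a_i} e_i$ for suitable exponents $a_i$ transforms the constants $c_{i,j}$ into $\la^{a_i+a_j-a_{i+j}}c_{i,j}$ while preserving the defining relations $[e_1,e_i]=e_{i+1}$; this is an isomorphism of Lie algebras for $\la\ne 0$. Over an algebraically closed field this can normalize many constants, but the point in characteristic two is subtler: I would look for a transformation of the structure constants that is \emph{not} realized by an isomorphism yet preserves the entire weight-graded differential up to an invertible linear change of the dual basis in each weight component. The natural candidate is to select the ``lowest'' nonzero structure constant $c_{i,j}$ (with $i+j$ minimal, say $c_{2,3}$ when present, or more generally the first bracket among the $e_i,\ e_j$ with $i,j\ge 2$) and toggle it: replace $\g$ by the algebra $\g'$ whose constants agree with those of $\g$ except that a single well-chosen $c_{i,j}$ is changed (for instance set to $0$, or to $1$). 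I would then produce an explicit $\F$-linear isomorphism $\Phi\colon \Lambda^\bullet\g^*\to\Lambda^\bullet(\g')^*$, built as a weight-preserving block-triangular map on generators $e^i$, that intertwines the two differentials, $\Phi\circ d_\g = d_{\g'}\circ\Phi$. Such a $\Phi$ is a chain isomorphism and hence forces $b_k(\g)=b_k(\g')$ for all $k$, even though $\g\not\cong\g'$.

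To guarantee non-isomorphism I would use a discrete invariant that is insensitive to the chain-level manipulation but distinguishes the algebras: the most economical is the dimension of a characteristic-ideal quotient, e.g. $\dim[\g,\g]$ together with the ranks of the maps $\ad(e_i)$ on $\g/[\g,\g]$, or the isomorphism type of the associated graded; the toggled constant alters the Lie bracket on a graded piece in a way no weight-preserving algebra automorphism can undo, while $e_1$ remains the (essentially unique) element of maximal rank by filiformity. Concretely, the $m_0$--$m_2$ pair of Theorem~\ref{m0m2} already realizes this phenomenon in the base case, so the template is to reduce a general Vergne-type $\g$ to a normal form in which exactly one auxiliary constant can be freely switched on or off, and to verify that switching it changes the algebra but not the chain-complex isomorphism class. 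The main obstacle I anticipate is constructing $\Phi$ in full generality: for an arbitrary tuple $(c_{i,j})$ the cross terms $[e_i,e_j]$ with $i,j\ge 2$ contribute to $d$ on high-degree forms, and ensuring that a single uniform change of one constant still admits an intertwining isomorphism---rather than only matching Betti numbers in low degrees---requires a careful inductive bookkeeping of how each $c_{i,j}$ enters $d$ in each weight, which is where I expect the bulk of the technical work to lie.
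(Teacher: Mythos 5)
Your proposal has the right overall shape---an explicit chain isomorphism intertwining the two differentials, plus a discrete invariant for non-isomorphism---and this is indeed the shape of the paper's argument. But there is a genuine gap at the center: the intertwining map $\Phi$ is never constructed, and the step you defer (``careful inductive bookkeeping of how each $c_{i,j}$ enters $d$ in each weight'') is precisely where all the content lies. The paper fills this gap structurally rather than by coordinate bookkeeping: by Lemma \ref{l:coolextensions}, every Vergne-type algebra of dimension $n\ge 6$ is an iterated one-dimensional central extension of either $\m_0(5)$ or $\m_2(5)$, encoded by a sequence of homogeneous $2$-cocycles $(\w_1,\dots,\w_{n-5})$; the partner algebra is obtained by swapping the five-dimensional base (Theorem \ref{m0m2}, via Proposition \ref{p:m2m0}, supplies the base case of the induction) and transporting each extension cocycle by the \emph{fixed} involution $f$ of Definition \ref{def:involution}; and Proposition \ref{p:extensions} is the inductive engine showing that if $f$ intertwines the differentials of $\g_1$ and $\g_2$, then it still intertwines the differentials of $\g_1(\w)$ and $\g_2(f(\w))$. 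Without an analogue of Proposition \ref{p:extensions}, your $\Phi$ remains a hope, not a map.

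Two of the specific commitments in your plan would moreover fail as stated. First, toggling a \emph{single} structure constant $c_{i,j}$ (while keeping the others) does not in general yield a Lie algebra: the Jacobi identity couples the constants. The paper avoids this by only ever modifying the algebra through a $2$-cocycle: $f(\w)$ is automatically a cocycle of the partner algebra (this follows from the commuting diagram, since $d_2 f(\w)=f(d_1\w)=0$), so the extension exists; the resulting partner differs from $\g$ in a coordinated family of constants---the $e^2\wedge e^{n-1}$-type component of the cocycle is flipped at \emph{every} level of the tower---not in one entry. Second, your non-isomorphism invariants do not discriminate: $\dim[\g,\g]=n-2$ for every $n$-dimensional Vergne-type algebra, and there is no justification for restricting attention to weight-preserving automorphisms, since an abstract isomorphism need not respect the grading. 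The paper instead exploits that the extension tower is canonical (each quotient is by the last nonzero term of the lower central series, a characteristic ideal), so an isomorphism between $\g$ and its partner would descend through the tower to an isomorphism $\m_0(5)\cong\m_2(5)$, which is ruled out because $\m_0(5)$ has a codimension-one abelian ideal and $\m_2(5)$ does not.
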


Note that Definition \ref{def:filiform} is not the standard definition for filiform Lie algebras. The common usage of the term is to say that a nilpotent Lie algebra is filiform if it is of maximal nilpotency. These two definitions are equivalent over a field of characteristic zero as shown in \cite{Vergne}. Moreover, it is not hard to check that Vergne's proof works for any infinite field, even of finite characteristic. However, this is not the case for algebras over $\z2$ as there are counter-examples showing that maximal nilpotency does not imply Definition \ref{def:filiform}. 

Throughout this paper, when we define a Lie algebra by relations, we will omit the zero ones and also those which follow from the given ones by skew-symmetry.
\section{Preliminaries}
Let $\g$ be a Lie algebra of Vergne type over $\F$ with basis $e_1,\dots, e_n$, and let $e^1,\dots,e^n$ denote the corresponding dual basis of the dual space $\g^*$ of $\g$. One can define a grading in the space of $k$-forms 
$\Lambda^{k}(\mathfrak{\g}) = {\bigoplus} \Lambda^k_{m}(\mathfrak{\g})$, 
where the finite-dimensional subspace $\Lambda^k_{m}(\mathfrak{\g})$ is spanned by \emph{homogeneous elements} $e^{i_1} {\wedge} \dots {\wedge} e^{i_k}$ such that  $i_1<\dots <i_k$ and $i_1{+}\dots{+}i_k=m$. 
We define the \emph{degree} of a homogeneous element  $x \in \Lambda^k_{m}(\mathfrak{\g})$ to be $m$ and its \emph{topological degree} to be $k$.

The differential on homogeneous elements of topological degree $1$ coincides with the dual mapping of the Lie bracket
$[ \, , ]: \Lambda^2 \g \to \g$ and we extend it to elements of topological degree greater than $1$ by
$$
d(\rho \wedge \eta)=d(\rho) \^ \eta+ \rho \^ d(\eta).
$$
For an arbitrary $n$-dimensional Lie algebra of Vergne type $\g$ with relations $[e_1,e_i]=e_{i+1}$ for all $2\leq i \leq n-1$ and $[e_i,e_j] = c_{i,j}e_{i+j}$ for some $c_{i,j} \in \F$ for all $i,j \ge 2$ with $i+j \le n$, the differential is:
\[d(e^1)=d(e^2)=0, \quad d(e^k)= e^1\^e^{k-1} + \sum_{\substack{i+j=k\\1<i<j}} c_{i,j}e^i\^e^j, \text{ for } k\ge3.\]
For Lie algebras of Vergne type, the grading is compatible with the differential 
$d$ and with the exterior product:
$$d \Lambda^k_{m} (\mathfrak{\g})
\subset \Lambda^{k{+}1}_{m} (\mathfrak{\g}), \qquad
\Lambda^{q}_{m}(\mathfrak{\g}) \wedge
\Lambda^{p}_{l}(\mathfrak{\g}) \subset
\Lambda^{q{+}p}_{m+l}(\mathfrak{\g}).
$$
Hence, every cocycle is a sum of homogeneous elements and we can write
\[H^k(\g) = \bigoplus_{m=k(k+1)/2}^{kn -  k(k-1)/2}H^k_m(\g)\]
where $H^k_m(\g)$ is spanned by the cohomology classes of homogeneous $k$-cocycles of degree $m$
\[H^k_m(\g) = \{ \bar{\w} \in H^k(\g) : \w =\sum_{\substack{1 \leq  i_1 <\dots < i_k \leq n \\ i_1 + \dots + i_k =m}} \w_{i_1,\dots,i_k} e^{i_1}\^ \dots \^ e^{i_k},\; \w_{i_1,\dots,i_k} \in \F \},\]
where $\bar{\w}$ is the cohomology class of $\w$.
Let us note here that the first cohomology $H^1$ of an $n$-dimensional Lie algebra of Vergne type $\g$ has dimension $2$; if we choose a Vergne basis for $\g$, then $H^1(\g) = \Span(\overline{e^1},\overline{e^2})$.

\section{The special case}\label{sect:special}

Consider the $n$-dimensional vector space $V=\Span(e_1,\dots,e_n)$ over $\F$. We define the algebra $\m_0(n)$ to be the vector space $V$ with relations $[e_1,e_i]=e_{i+1}$ for $i \geq 2$, and the algebra $\m_2(n)$ to be the same vector space $V$ with relations $[e_1,e_i]=e_{i+1},$ for $i \geq 2$ and $[e_2,e_j]=e_{j+2}$ for $j \geq 3$.

We define the linear map $D_1:\Lambda^1(e^1,\dots,e^n) \to \Lambda^1(e^1,\dots,e^n)$ by
\[D_1(e^1)=D_1(e^2)=0, \; \;D_1(e^i)=e^{i-1},\; i \ge 3,\]
and the linear map $D_2:\Lambda^1(e^1,\dots,e^n) \to \Lambda^1(e^1,\dots,e^n)$ by
\[D_2(e^i)=0,i \leq 4,\;\;D_2(e^i)=e^{i-2}, \;i \ge 5.\]
We extend the action of these maps to the linear maps $D_i :\Lambda^*(e^1,\dots,e^n) \to \Lambda^*(e^1,\dots,e^n)$ as derivations defined by $D_i(\xi\^\zeta) = D_i(\xi) \^ \zeta + \xi\^D_i(\zeta)$ for $i \in \{1,2\}$.
The differential $d_0$ on $\m_0(n)$ is as follows:
\[d_0(e^1)=d_0(e^2)=0, \quad d_0(e^i)=e^1\^e^{i-1}, \text{ for all } 1\leq i \leq n,\]
and the differential $d_2$ on $\m_2(n)$ is:
\[d_2(e^1)=d_2(e^2)=0, \quad d_2(e^i)=\begin{cases}e^1\^e^{i-1}& \mbox{if } i = 3,4,\\ 
e^1\^e^{i-1} + e^2\^e^{i-2}& \mbox{if } i \ge 5. \end{cases} \]
We notice here that the operator $d_0=e^1\^D_1$ is the differential on $\m_0(n)$ and the operator $d_2=e^1\^D_1 + e^2\^D_2$ is the differential on $\m_2(n)$. 

Now we are going to make some remarks on the similarity between $D_2$ and $D_1^2$. First, we observe that $D_1^2$ is a derivation. Indeed, we have
\[D_1^2(\xi\^\zeta) = D_1(D_1(\xi) \^ \zeta + \xi\^D_1(\zeta)) = D_1^2(\xi) \^ \zeta + \xi\^D_1^2(\zeta).\]
Furthermore, it follows from the definitions that $D_2$ acts on monomials of topological degree one as follows:
\[ D_2(e^k)=\begin{cases}D_1^2(e^k) + e^2& \mbox{if } k = 4,\\ 
D_1^2(e^k) & \mbox{otherwise}. \end{cases} \]
If we take an arbitrary form $\w$, we can write it as $\w = e^4\^\w' + \w''$ where $\w'$ and $\w''$ have no component in the $e^4$ direction, and then $e^2\^D_2(\w) = e^2\^(e^4\^D_2(\w') + D_2(\w''))$ and also $e^2\^D_1^2(\w) = e^2\^(e^2\^\w' + e^4\^D_1^2(\w') + D_1^2(\w''))=e^2\^(e^4\^D_1^2(\w') + D_1^2(\w''))$. The following lemma is immediate.

\begin{lemma}\label{d1dstar}
For all $\w \in \Lambda^{*}(e^1,\dots,e^n)$, we have $e^2\^D_2(\w) =e^2\^D_1^2(\w) $.
\end{lemma}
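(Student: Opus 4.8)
The plan is to read the statement off from the derivation structure of the operators involved, so almost no computation is needed beyond what already precedes the statement. Since $D_1^2$ and $D_2$ are both derivations on $\Lambda^*(e^1,\dots,e^n)$, so is their difference $\Delta = D_2 + D_1^2$ (we are in characteristic two, so subtraction and addition agree). The case distinction recorded just before the lemma says precisely that $\Delta$ vanishes on every generator $e^k$ with $k \ne 4$ and satisfies $\Delta(e^4) = e^2$. Because a derivation is determined by its values on generators through the Leibniz rule, I would first observe that $\Delta(\eta) = 0$ for every form $\eta$ having no factor $e^4$: each term in the Leibniz expansion of $\Delta(\eta)$ differentiates a single generator of $\eta$, and none of those generators is $e^4$, so every term dies. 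Equivalently, $D_2$ and $D_1^2$ agree on all $e^4$-free forms.

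With this in hand I would invoke the decomposition $\w = e^4 \wedge \w' + \w''$, where $\w'$ and $\w''$ have no $e^4$ component, exactly as in the computation preceding the statement. That computation already reduces the two sides to $e^2 \wedge D_2(\w) = e^2 \wedge (e^4 \wedge D_2(\w') + D_2(\w''))$ and $e^2 \wedge D_1^2(\w) = e^2 \wedge (e^4 \wedge D_1^2(\w') + D_1^2(\w''))$, the stray term $e^2 \wedge e^2 \wedge \w'$ arising from $D_1^2(e^4) = e^2$ having already vanished. Since $\w'$ and $\w''$ are $e^4$-free, the previous paragraph gives $D_2(\w') = D_1^2(\w')$ and $D_2(\w'') = D_1^2(\w'')$, so the two right-hand sides coincide termwise and the lemma follows.

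There is essentially no obstacle here beyond correct bookkeeping; the single delicate point is the characteristic-two cancellation $e^2 \wedge e^2 = 0$, which is exactly what renders the discrepancy $\Delta(e^4) = e^2$ invisible after wedging with $e^2$. An alternative and equally short route bypasses the decomposition entirely: one shows directly that $e^2 \wedge \Delta(\w) = 0$ for every $\w$ by expanding $\Delta(\w)$ via the Leibniz rule. The only surviving terms replace a factor $e^4$ of $\w$ by $e^2$, so each such term already carries a factor $e^2$, and wedging once more with $e^2$ annihilates it; this delivers $e^2 \wedge D_2(\w) = e^2 \wedge D_1^2(\w)$ in a single line. I would present whichever of the two is cleaner to typeset, but both rest on the same observation that $\Delta$ is a derivation supported only at $e^4$.
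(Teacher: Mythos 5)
Your proof is correct and takes essentially the same route as the paper: the paper's own argument is exactly your first version, using the decomposition $\omega = e^4\wedge\omega' + \omega''$ and the cancellation $e^2\wedge e^2=0$, with the agreement of $D_2$ and $D_1^2$ on $e^4$-free forms left implicit, whereas you spell out that this agreement follows because their sum is a derivation vanishing on every generator except $e^4$. Your alternative one-line argument (every surviving Leibniz term already carries a factor $e^2$) is just a compact repackaging of the same observation, so no substantive difference either way.
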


\begin{definition}\label{def:involution}
Let $n,k \in \mathbb{N}$ with $2\leq k \leq n$. We define the map $f^n_k:\Lambda^k(e^1,\dots,e^n) \to \Lambda^k(e^1,\dots,e^n)$ by $f^n_k(e^1\^x + e^2\^y + z) = e^1\^x + e^2\^(y + D_1(x)) + z$, where $x \in \Lambda^{k-1}(e^2,\dots,e^n), y \in \Lambda^{k-1}(e^3,\dots,e^n)$ and  $z \in \Lambda^{k}(e^3,\dots,e^n)$.
\end{definition}

For brevity, we will refer to $f^n_k$ just by $f$ unless a clarification is needed.
Note that $f$ is an involution; indeed, $f(f(e^1\^x + e^2\^y + z)) = e^1\^x + e^2\^(y + D_1(x) + D_1(x)) + z = e^1\^x + e^2\^y + z$.

\begin{proposition}\label{p:m2m0}
Let $n\ge 5$ and let $d_0,d_2$ be the differentials of $\m_0(n), \m_2(n)$ respectively. Then for all $k\ge 2$, the following diagram commutes
\begin{center}
\begin{tikzpicture}[every node/.style={midway}]
\matrix[column sep={10em,between origins},
        row sep={3em}] at (0,0)
{ \node(g1k)   {$\Lambda^k(\m_0(n))$}  ; & \node(g2k) {$\Lambda^k(\m_2(n))$}; \\
  \node(g1k1)   {$\Lambda^{k+1}(\m_0(n))$}  ; & \node(g2k1) {$\Lambda^{k+1}(\m_2(n))$};            \\};
\draw[->] (g1k) -- (g2k) node[anchor=south]  {$f$};
\draw[->] (g1k1) -- (g2k1) node[anchor=south]  {$f$};
\draw[->] (g1k) -- (g1k1) node[anchor=east]  {$d_0$};
\draw[->] (g2k) -- (g2k1) node[anchor=east]  {$d_2$};
\end{tikzpicture}
\end{center}
\end{proposition}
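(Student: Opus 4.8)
The plan is to verify the operator identity $f\circ d_0 = d_2\circ f$ by evaluating both composites on an arbitrary $k$-form and exploiting the factorizations $d_0 = e^1\wedge D_1$ and $d_2 = e^1\wedge D_1 + e^2\wedge D_2$. The first move is to invoke Lemma \ref{d1dstar} to replace $e^2\wedge D_2$ by $e^2\wedge D_1^2$, so that it suffices to show $f\bigl(e^1\wedge D_1(\omega)\bigr) = e^1\wedge D_1(f(\omega)) + e^2\wedge D_1^2(f(\omega))$ for every $\omega$. This turns the whole problem into a manipulation involving only the single derivation $D_1$ and its square, both of which annihilate $e^1$ and $e^2$; together with the characteristic-two facts $e^i\wedge e^i = 0$ and the triviality of wedge signs, these are the only ingredients needed.

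Next I would fix the decomposition of Definition \ref{def:involution}, writing $\omega = e^1\wedge x + e^2\wedge y + z$ with $x\in\Lambda^{k-1}(e^2,\dots,e^n)$, $y\in\Lambda^{k-1}(e^3,\dots,e^n)$, $z\in\Lambda^k(e^3,\dots,e^n)$, so that $f(\omega) = \omega + e^2\wedge D_1(x)$. For the left-hand side, using that $D_1$ is a derivation killing $e^1$ and $e^2$ gives $d_0\omega = e^1\wedge\bigl(e^2\wedge D_1(y) + D_1(z)\bigr)$. The point to check here is that this form lies entirely in the $e^1$-component of the $f$-decomposition, so that $f$ acts on it merely by adding $e^2\wedge D_1$ of its $e^1$-coefficient; after the term $e^2\wedge e^2\wedge D_1^2(y)$ drops out, one is left with $f(d_0\omega) = e^1\wedge e^2\wedge D_1(y) + e^1\wedge D_1(z) + e^2\wedge D_1^2(z)$.

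For the right-hand side I would expand $e^1\wedge D_1(f(\omega))$ and $e^2\wedge D_1^2(f(\omega))$ separately, again using the derivation property and $D_1(e^1)=D_1(e^2)=0$. Each of the two pieces contributes a term $e^1\wedge e^2\wedge D_1^2(x)$ arising from the correction $e^2\wedge D_1(x)$ built into $f$, and the decisive step is that these two copies cancel modulo $2$. Everything else collapses through $e^1\wedge e^1 = 0$ and $e^2\wedge e^2 = 0$, leaving exactly $e^1\wedge e^2\wedge D_1(y) + e^1\wedge D_1(z) + e^2\wedge D_1^2(z)$, which matches the left-hand side and closes the argument.

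The computation is largely bookkeeping, so the main obstacle is not difficulty but care: one must track the $f$-decomposition correctly through $d_0$---in particular the observation that $d_0\omega$ has no $e^1$-free part, so that no extra correction terms appear when $f$ is applied to it---and keep the cancellations straight. The conceptual heart is the vanishing of the paired $e^1\wedge e^2\wedge D_1^2(x)$ terms: this is precisely where characteristic two and the identification $e^2\wedge D_2 = e^2\wedge D_1^2$ of Lemma \ref{d1dstar} conspire to make $f$ intertwine $d_0$ with $d_2$, and it is exactly this cancellation that would break down over a field of characteristic zero.
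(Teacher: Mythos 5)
Your proof is correct and takes essentially the same route as the paper: the same decomposition $\omega = e^1\wedge x + e^2\wedge y + z$ from Definition \ref{def:involution}, the same key identification $e^2\wedge D_2 = e^2\wedge D_1^2$ from Lemma \ref{d1dstar}, the factorizations $d_0 = e^1\wedge D_1$ and $d_2 = e^1\wedge D_1 + e^2\wedge D_2$, and the same characteristic-two cancellation of the paired $e^1\wedge e^2\wedge D_1^2(x)$ terms. The only (immaterial) difference is the direction of verification: you check $f\circ d_0 = d_2\circ f$ directly, while the paper computes $f\circ d_2 = d_0\circ f$, which is equivalent since $f$ is an involution.
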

\begin{proof}
Let $h=e^1\^x + e^2\^y + z \in \Lambda^{k}(e^1,\dots,e^n)$ with $x \in  \Lambda^{k-1}(e^2,\dots,e^n), y \in  \Lambda^{k-1}(e^3,\dots,e^n)$ and $z \in  \Lambda^{k}(e^3,\dots,e^n)$. We calculate
\[d_2(h)=e^1\^e^2\^D_2(x) + e^1\^e^2\^D_1(y) + e^1\^D_1(z) + e^2\^D_2(z)\]
and
\[f(d_2(h))=e^1\^e^2\^D_2(x) + e^1\^e^2\^D_1(y) + e^1\^D_1(z) + e^2\^D_2(z) + e^2\^D_1^2(z)\]
which, by Lemma \ref{d1dstar}, gives
\[f(d_2(h))=e^1\^e^2\^D_2(x) + e^1\^e^2\^D_1(y) + e^1\^D_1(z).\]
We also have:
\begin{align*}
d_0(f(h)) &= d_0(e^1\^x + e^2\^(y+D_1(x)) + z) 
\\&=e^1\^e^2\^D_2(x) + e^1\^e^2\^D_1(y) + e^1\^D_1(z),
\end{align*}
and the claim follows.
\end{proof}

Theorem \ref{m0m2} is an immediate application of Proposition \ref{p:m2m0}.
\begin{proof}[Proof of Theorem \ref{m0m2}]
The $k$-th Betti number for an $n$-dimensional Lie algebra $\g$ is given by
\[b_k = \dim(Z_k(\g)) + \dim(Z_{k-1}(\g)) - \binom{n}{k-1},\]
where $Z_k(\g)$ denotes the space of $k$-cocycles. From Proposition  \ref{p:m2m0} and the fact that $\dim(Z_1(\m_0(n)))= \dim(Z_1(\m_2(n)) )=2$, it follows that  $\dim(Z_k(\m_0(n)))= \dim(Z_k(\m_2(n)) )$ for $1 \leq k \leq n$.
\end{proof}
The explicit computation of these Betti numbers is much harder than in the case of zero characteristic. We have already seen that $b_1=2$, and one can show that $b_2=\lfloor\frac12(n+1)\rfloor$. The computation of $b_3$ is already quite technically involved and will be presented in a forthcoming paper.

\section{The general case}\label{sect:general}

A Lie algebra, in general, has several central extensions as described in \cite{neeb} or \cite{weibel}. In our next lemma we give a condition for a central extension of a Lie algebra of Vergne type to be also of Vergne type. 

Given a Lie algebra of Vernge type $\g$ with Vergne basis $e_1,\dots, e_n$ and a $2$-cocycle $\w$ of $\g$, we define the central extension
$\g(\w) =\g\oplus \F$ of $\g$ to be the Lie algebra with relations
\begin{equation}\label{def:omega}
[e_i,e_j]_{\g(\w)} = [e_i,e_j]_{\g} + \w(e_i,e_j)e_{n+1},
\end{equation}
where $e_{n+1}$ is a basis for $\F$.

\begin{lemma}\label{l:coolextensions}
{\ }
\begin{enumerate}[(a)]
\item{Let $\g$ be a Lie algebra of Vergne type with Vergne basis $e_1,\dots, e_n$ and let $\w$ be a homogeneous $2$-cocycle with a nonzero $e^1\^e^n$ component. Then $\g(\w)$ is a Lie algebra of Vergne type.}
\item{Let $\g'$ be a Lie algebra of Vergne type with Vergne basis $e_1,\dots, e_{n+1}$. For the Lie algebra of Vergne type $\g=\g'/\Span(e_{n+1})$, which we identify with $\Span(e_1,\dots,e_{n})$, there exists a homogeneous $2$-cocycle with a nonzero $e^1\^e^n$ component $\w$, such that $\g'=\g(\w)$.}
\end{enumerate}
\end{lemma}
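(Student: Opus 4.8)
The plan is to prove the two statements by direct manipulation of the Vergne relations, the one structural fact driving everything being that a homogeneous $2$-cocycle with nonzero $e^1\^e^n$ component is necessarily homogeneous of degree $n+1$.

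For part (a) I would write such a cocycle as $\w=\sum_{i<j,\ i+j=n+1} w_{i,j}\,e^i\^e^j$ with $w_{1,n}\neq 0$. Homogeneity is the crucial point: it forces $\w(e_1,e_i)=0$ for every $2\le i\le n-1$ and forces $\w(e_i,e_j)\neq 0$ only when $i+j=n+1$. Reading off the brackets of $\g(\w)$ in the basis $e_1,\dots,e_n,e_{n+1}$ from \eqref{def:omega}, the relations $[e_1,e_i]=e_{i+1}$ therefore survive unchanged for $2\le i\le n-1$; a new relation $[e_1,e_n]=w_{1,n}e_{n+1}$ appears; every bracket $[e_i,e_j]$ with $i,j\ge 2$ keeps its $\g$-value $c_{i,j}e_{i+j}$ when $i+j\le n$ and equals $w_{i,j}e_{n+1}$ when $i+j=n+1$; and all remaining brackets vanish. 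Setting $\tilde e_{n+1}=w_{1,n}e_{n+1}$, which is legitimate as $w_{1,n}\neq 0$, I would check that in the basis $e_1,\dots,e_n,\tilde e_{n+1}$ all relations are exactly of Vergne form: $[e_1,e_n]=\tilde e_{n+1}$ and $[e_i,e_j]=(w_{i,j}w_{1,n}^{-1})\,\tilde e_{n+1}$ for $i+j=n+1$. This exhibits a Vergne basis and proves $\g(\w)$ is of Vergne type.

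For part (b) I would first show the top vector $e_{n+1}$ of the Vergne basis of $\g'$ is central: $[e_1,e_{n+1}]=0$ because $\ad(e_1)$ only raises the index and there is no $e_{n+2}$, while $[e_i,e_{n+1}]=0$ for $i\ge 2$ because $i+(n+1)>n+1$ lies outside the range of the Vergne relations. Hence $\Span(e_{n+1})$ is a central ideal and $\g=\g'/\Span(e_{n+1})$ is defined; projecting the relations of $\g'$ and identifying $\g$ with $\Span(e_1,\dots,e_n)$ shows that $\g$ carries precisely the Vergne relations of dimension $n$ (the brackets with index sum $\le n$ are unchanged, while $[e_1,e_n]$ and the brackets with index sum $n+1$ die, since their values lie in $\Span(e_{n+1})$). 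I then define $\w$ through the linear section $s(e_i)=e_i$, $1\le i\le n$, by letting $\w(e_i,e_j)$ be the $e_{n+1}$-component of $[e_i,e_j]_{\g'}$; equivalently $\w=e^1\^e^n+\sum_{i+j=n+1,\ 2\le i<j}c_{i,j}\,e^i\^e^j$, where the $c_{i,j}$ are the structure constants of $\g'$. By construction $\w$ is homogeneous of degree $n+1$ with $e^1\^e^n$-coefficient $1\neq 0$, and reconstructing the brackets via \eqref{def:omega} recovers $\g'$, so $\g'=\g(\w)$.

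It remains to verify that $\w$ is a $2$-cocycle of $\g$, and this is the step I would treat most carefully. Rather than expanding $d\w$ term by term, I would invoke the standard correspondence between central extensions and $2$-cohomology (\cite{neeb,weibel}): for the cochain attached to a linear section, the cocycle identity $d\w=0$ is equivalent to the Jacobi identity in the extension $\g'$, and since $\g'$ is a genuine Lie algebra, $\w$ is automatically closed. The only real obstacle in the whole argument is the homogeneity bookkeeping in parts (a) and (b): it is exactly the degree-$(n+1)$ concentration of $\w$ that keeps the relations $[e_1,e_i]=e_{i+1}$ untouched and confines the new $e_{n+1}$ terms to the brackets with $i+j=n+1$, and hence that makes both $\g(\w)$ and the quotient $\g$ come out of Vergne type. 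A final consistency check I would record is that the two constructions are mutually inverse, the normalization $\w(e_1,e_n)=1$ in (b) matching the rescaling $\tilde e_{n+1}=w_{1,n}e_{n+1}$ in (a).
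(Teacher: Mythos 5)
Your proof is correct and is essentially the paper's argument, just written out in full: part (a) is exactly what the paper means by ``follows from the definition of an extension and the choice of $\w$'' (including the harmless rescaling $\tilde e_{n+1}=w_{1,n}e_{n+1}$), and in part (b) the $2$-form you build from the $e_{n+1}$-components of the brackets is literally the paper's $\w = d(e^{n+1})$ projected to $\Lambda^2(\g)$. The only difference is how closedness is justified: the paper gets it for free from $d^2(e^{n+1})=0$ (the differentials of $\g$ and $\g'$ agree on $\Lambda^*(e^1,\dots,e^n)$), while you invoke the standard correspondence between central extensions and $2$-cocycles; both are valid, and the paper itself cites the same references for that correspondence.
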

\begin{proof}
(a) This follows from the definition of an extension and the choice of $\w$. 
\newline
(b) Consider the homogeneous $2$-form $\w$ defined by $\w = d(e^{n+1})$. 
The projection of $\w$ to $\Lambda^2(\g)$ is a homogeneous $2$-cocycle with a nonzero $e^1\^e^n$ component.
\end{proof} 

Let $\g$ be a Lie algebra of Vergne type with Vergne basis $e_1,\dots,e_n$ and bracket relations $[e_1,e_k] = e_{k+1}$ and $[e_i,e_j]=c_{i,j}e_{i+j}$, for some $c_{i,j} \in \F$ where $k \ge 2$ and $i,j \ge 2$.
As in the previous section, we define the map $D_1:\Lambda^1(e^1,\dots,e^n) \to \Lambda^1(e^2,\dots,e^n)$ by
\[D_1(e^1)=D_1(e^2)=0, \; \;D_1(e^i)=e^{i-1}, i \ge 3,\]
and we extend this map to $D_1 :\Lambda^*(e^1,\dots,e^n) \to \Lambda^*(e^2,\dots,e^n)$ as a derivation by $D_1(\xi\^\zeta) = D_1(\xi) \^ \zeta + \xi\^D_1(\zeta)$.

Next, we define the operator $R=e^1\^D_1 + d$, where $d$ is the differential of $\g$. Observe that $R$ is a derivation of the exterior algebra as it is the sum of two derivations. Moreover, we have that $R(e^1)=R(e^2)=R(e^3)=R(e^4)=0$ and 
\[R(e^5)=c_{2,3} e^2\^e^3, \dots, R(e^n) = \sum_{\substack{i+j=n\\1<i<j}} c_{i,j}e^i\^e^j.\]
Finally, we observe that the image of $R$ lies in $\Lambda^*(e^2,\dots,e^n)$.

\begin{lemma}\label{lemma:D1Rvergne}
Let $\g$ be a Lie algebra of Vergne type with Vergne basis $e_1,\dots, e_n$ and let $e^1\^x + e^2\^y + z \in \Lambda^k(\g^*)$, where $x \in \Lambda^{k-1}(e^2,\dots,e^n), y \in \Lambda^{k-1}(e^3,\dots,e^n), z \in \Lambda^{k}(e^3,\dots,e^n)$ and $k \ge 2$. If $e^1\^x + e^2\^y + z$ is a cocycle, then
\[e^2\^D_1(z) = e^2\^R(x).\]
\end{lemma}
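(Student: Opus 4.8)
The plan is to exploit the operator identity $d = e^1\wedge D_1 + R$, which holds because $R=e^1\wedge D_1 + d$ by definition and we work in characteristic two (so $+$ and $-$ coincide), together with the fact, recorded above, that the image of $R$ lies in $\Lambda^*(e^2,\dots,e^n)$, and likewise $D_1(z)\in\Lambda^*(e^2,\dots,e^n)$ since $z$ has no $e^1$- or $e^2$-component. First I would expand the cocycle condition $d(e^1\wedge x + e^2\wedge y + z)=0$ using the derivation property of $d$ and the relations $d(e^1)=d(e^2)=0$, which gives $e^1\wedge d(x) + e^2\wedge d(y) + d(z)=0$.

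Next I would substitute $d = e^1\wedge D_1 + R$ into each summand and simplify. In the first summand the term $e^1\wedge e^1\wedge D_1(x)$ vanishes, leaving $e^1\wedge R(x)$; in the second, $e^2\wedge e^1 = e^1\wedge e^2$ produces $e^1\wedge e^2\wedge D_1(y) + e^2\wedge R(y)$; the third is $e^1\wedge D_1(z) + R(z)$. Hence the cocycle condition becomes
\[ e^1\wedge R(x) + e^1\wedge e^2\wedge D_1(y) + e^2\wedge R(y) + e^1\wedge D_1(z) + R(z) = 0. \]

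The key step is then to project this identity onto the graded summand $e^1\wedge\Lambda^*(e^3,\dots,e^n)$, that is, to retain exactly the terms containing $e^1$ but not $e^2$. It is convenient to write each element $\alpha\in\Lambda^*(e^2,\dots,e^n)$ uniquely as $\alpha = e^2\wedge\alpha_1 + \alpha_0$ with $\alpha_0,\alpha_1\in\Lambda^*(e^3,\dots,e^n)$. Under this projection the terms $e^1\wedge e^2\wedge D_1(y)$, $\,e^2\wedge R(y)$ and $R(z)$ are all annihilated (the first lies in $e^1\wedge e^2\wedge\Lambda^*$, the latter two contain no $e^1$), while $e^1\wedge R(x)$ and $e^1\wedge D_1(z)$ contribute only their $e^2$-free parts $e^1\wedge R(x)_0$ and $e^1\wedge D_1(z)_0$. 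What survives is therefore $e^1\wedge\bigl(R(x)_0 + D_1(z)_0\bigr)=0$, and since $e^1\wedge(\cdot)$ is injective on $\Lambda^*(e^3,\dots,e^n)$ this forces $R(x)_0 = D_1(z)_0$.

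Finally, wedging with $e^2$ both recovers the desired statement and absorbs the irrelevant $e^2$-parts, giving $e^2\wedge R(x) = e^2\wedge R(x)_0 = e^2\wedge D_1(z)_0 = e^2\wedge D_1(z)$. I do not expect a genuine obstacle here; the only thing requiring care is the bookkeeping across the four graded pieces $\{1,\,e^1,\,e^2,\,e^1\wedge e^2\}\wedge\Lambda^*(e^3,\dots,e^n)$. The statement is deliberately phrased with an outer $e^2\wedge(\cdot)$ on both sides precisely because the usable information resides in the $e^2$-free components $R(x)_0$ and $D_1(z)_0$, and the outer $e^2$ simultaneously selects the correct projection of the cocycle condition and discards the $e^2$-parts that the cocycle condition does not control.
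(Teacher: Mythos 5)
Your proposal is correct and follows essentially the same route as the paper: both expand the cocycle condition via $d = e^1\wedge D_1 + R$ to obtain the identity $e^1\wedge R(x) + e^1\wedge e^2\wedge D_1(y) + e^2\wedge R(y) + e^1\wedge D_1(z) + R(z) = 0$, and then extract the claim by isolating the terms involving $e^1$ and wedging with $e^2$. Your explicit projection onto the summand $e^1\wedge\Lambda^*(e^3,\dots,e^n)$ is just a slightly more granular bookkeeping of the paper's two-step separation (first by presence of $e^1$, then killing the $e^2\wedge D_1(y)$ term by wedging with $e^2$), so the arguments coincide.
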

\begin{proof}
If we assume that $e^1\^x + e^2\^y + z$ is a cocycle, then
\begin{align*}
0&=d(e^1\^x + e^2\^y + z)
\\&=e^1\^R(x) + e^1\^e^2\^D_1(y) + e^2\^R(y) + e^1\^D_1(z) + R(z)
\end{align*}
Hence, we have $e^2\^R(y) + R(z) =0$ and $e^1\^(R(x) + e^2\^D_1(y) + D_1(z))=0$. Taking the exterior product of the second equation with $e^2$, we obtain the claim.
\end{proof}

\begin{proposition}\label{p:extensions}
Let $\g_1, \g_2$ be $n$-dimensional Lie algebras of Vernge type with Vergne bases $e_1,\dots, e_n$. Let $\w$ be a homogeneous $2$-cocycle of $\g_1$ with a nonzero $e^1\^e^n$ component and denote by $f$ the involution from Definition \ref{def:involution}. If for $k \ge 2$ the diagram
\begin{center}
\begin{tikzpicture}[every node/.style={midway}]
\matrix[column sep={10em,between origins},
        row sep={3em}] at (0,0)
{ \node(g1k)   {$\Lambda^k(\g_1)$}  ; & \node(g2k) {$\Lambda^k(\g_2)$}; \\
  \node(g1k1)   {$\Lambda^{k+1}(\g_1)$}  ; & \node(g2k1) {$\Lambda^{k+1}(\g_2)$};            \\};
\draw[->] (g1k) -- (g2k) node[anchor=south]  {$f$};
\draw[->] (g1k1) -- (g2k1) node[anchor=south]  {$f$};
\draw[->] (g1k) -- (g1k1) node[anchor=east]  {$d_1$};
\draw[->] (g2k) -- (g2k1) node[anchor=east]  {$d_2$};
\end{tikzpicture}
\end{center}
commutes, then the following diagram also commutes
\begin{center}
\begin{tikzpicture}[every node/.style={midway}]
\matrix[column sep={10em,between origins},
        row sep={3em}] at (0,0)
{ \node(g1k)   {$\Lambda^k(\g_1(\w))$}  ; & \node(g2k) {$\Lambda^k(\g_2(f^n_2(\w))$}; \\
  \node(g1k1)   {$\Lambda^{k+1}(\g_1(\w))$}  ; & \node(g2k1) {$\Lambda^{k+1}(\g_2(f^n_2(\w))$,};            \\};
\draw[->] (g1k) -- (g2k) node[anchor=south]  {$f$};
\draw[->] (g1k1) -- (g2k1) node[anchor=south]  {$f$};
\draw[->] (g1k) -- (g1k1) node[anchor=east]  {$\hat{d_1}$};
\draw[->] (g2k) -- (g2k1) node[anchor=east]  {$\hat{d_2}$};
\end{tikzpicture}
\end{center}
where $d_i, \hat{d_i}$ denote the corresponding differential.
\end{proposition}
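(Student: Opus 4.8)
The plan is to make both differentials on the central extensions completely explicit, reduce the large square to the square that is assumed to commute, and isolate a single residual identity that encodes the interaction between $\w$, the involution $f$, and the differing structure constants of $\g_1$ and $\g_2$. First I would record the form of $\hat d_i$: since $e_{n+1}$ is central and $[e_1,e_n]_{\g(\w)}=\w(e_1,e_n)e_{n+1}$, the differential $\hat d_i$ restricts to $d_i$ on $\Lambda^*(e^1,\dots,e^n)$ and satisfies $\hat d_1(e^{n+1})=\w$, $\hat d_2(e^{n+1})=f(\w)$. Here $f(\w)$ is a genuine $2$-cocycle of $\g_2$ with nonzero $e^1\^e^n$ component (by the assumed commutativity applied to $\w$ and since $f$ fixes the $e^1\^(\cdot)$ part), so $\g_2(f(\w))$ is of Vergne type by Lemma~\ref{l:coolextensions}. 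By homogeneity $\w$ has degree $n+1$, so its only $e^1\^(\cdot)$ term is $e^1\^e^n$, and after rescaling $e_{n+1}$ (as arises automatically for the cocycles $d(e^{n+1})$ of Lemma~\ref{l:coolextensions}(b)) I may assume the $e^1$-component $x_\w$ of $\w$ equals $e^n$. Writing $h=\alpha+e^{n+1}\^\beta$ with $\alpha,\beta\in\Lambda^*(e^1,\dots,e^n)$, the derivation property gives $\hat d_i(h)=d_i\alpha+\eta\^\beta+e^{n+1}\^d_i\beta$, where $\eta=\w$ for $\g_1(\w)$ and $\eta=f(\w)$ for $\g_2(f(\w))$.

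Next I would pin down the behaviour of the involution on the extended space. Writing $f=\mathrm{id}+S$, where $S(\cdot)=e^2\^D_1(\text{$e^1$-component of }\cdot)$, and using that on the $(n+1)$-dimensional space $D_1(e^{n+1})=e^n$, a direct computation yields
\[
f(\alpha+e^{n+1}\^\beta)=f(\alpha)+e^2\^e^n\^x_\beta+e^{n+1}\^f(\beta),
\]
where $x_\beta$ denotes the $e^1$-component of $\beta$. The extra term $e^2\^e^n\^x_\beta$ is precisely the contribution of $D_1(e^{n+1})=e^n$ and is the reason $f$ on $\Lambda^*(\g_1(\w))$ is not simply $f$ applied to $\alpha$ and $\beta$ separately.

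Substituting these formulas into $f\hat d_1(h)$ and $\hat d_2 f(h)$ and comparing the $e^{n+1}$-components and the $\Lambda^*(e^1,\dots,e^n)$-components separately, the $e^{n+1}$-parts agree because $f d_1\beta=d_2 f\beta$ (the hypothesis in topological degree $k-1$), and the terms $f d_1\alpha$ and $d_2 f\alpha$ cancel (the hypothesis in degree $k$). All dependence on $\alpha$ disappears, and the extended square commutes if and only if, for every $\beta$,
\[
f(\w\^\beta)+e^2\^e^n\^x_{d_1\beta}=d_2\!\left(e^2\^e^n\^x_\beta\right)+f(\w)\^f(\beta).\tag{$\star$}
\]

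Identity $(\star)$ is the heart of the argument and the step I expect to be the main obstacle. Using $f=\mathrm{id}+S$ I would first simplify $f(\w\^\beta)+f(\w)\^f(\beta)$: the cross term $S(\w)\^S(\beta)$ vanishes because it carries $e^2\^e^2$, and the remaining cross terms cancel against parts of $S(\w\^\beta)$, leaving $e^2\^e^n\^D_1\beta+e^2\^D_1(\w)\^x_\beta$. With $x_\w=e^n$ I would then split $(\star)$ by its $e^1$-content. The $e^1\^e^2\^(\cdots)$ part reduces to the tautology $e^n\^D_1 x_\beta+e^{n-1}\^x_\beta=e^{n-1}\^x_\beta+e^n\^D_1 x_\beta$. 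For the $e^2\^(\cdots)$ part, the cocycle condition on $\w$ enters through Lemma~\ref{lemma:D1Rvergne}, which rewrites the $\w$-contribution $e^2\^D_1(\cdot)$ as $e^2\^R_1(e^n)$; on the other side the operators $R_1,R_2$ of $\g_1,\g_2$ combine, and the intertwining relation on one-forms (the degree-one instance of the hypothesis, where $f$ is the identity) forces $(R_1+R_2)(e^k)=e^2\^e^{k-2}$, so the derivation $R_1+R_2$ maps into the ideal generated by $e^2$. Consequently $e^2\^(R_1+R_2)(\cdot)=0$ and $e^2\^(R_1+R_2)(e^n)=0$, and both sides collapse to $e^2\^R_1(e^n)\^x_\beta$, proving $(\star)$. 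Reconciling the correction term $e^2\^e^n\^x_\beta$, the non-multiplicativity of $f$, and the differing structure constants of $\g_1$ and $\g_2$ is the delicate point; once $(\star)$ is established, the rest is bookkeeping with the grading and the derivation property.
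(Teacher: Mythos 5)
Your proof is correct, and up to the residual identity it follows the same skeleton as the paper's: you make the extension differentials explicit, derive the same twisted formula $f(\alpha+e^{n+1}\wedge\beta)=f(\alpha)+e^{n+1}\wedge f(\beta)+e^2\wedge e^n\wedge x_\beta$ (the paper's $f(h)=e^{n+1}\wedge f(x)+f(y)+e^2\wedge x_1\wedge e^n$), use the hypothesis at topological degrees $k$ and $k-1$ to strip out $\alpha$ and the $e^{n+1}$-component, and your $(\star)$ is exactly the paper's equation \eqref{eq:dfg11}. Where you genuinely diverge is in verifying $(\star)$. The paper expands $f(\w)\wedge f(x)+f(\w\wedge x)$ in the components $\w_2,\w_3$ and disposes of the term $e^2\wedge d_2(x_1\wedge e^n)$ by conjugating back through the hypothesis, $d_2=fd_1f$, so that only $\g_1$-data and Lemma \ref{lemma:D1Rvergne} ever appear. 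You instead expand $d_2$ directly with its own operator $R_2$ and exploit the structure-constant relation $(R_1+R_2)(e^k)=e^2\wedge e^{k-2}$: then $R_1+R_2$ is a derivation with image in the ideal generated by $e^2$, so everything it produces dies against the ambient $e^2$, and Lemma \ref{lemma:D1Rvergne} kills what remains. Your finish is more conceptual, since it isolates exactly what the commuting square says about how $\g_1$ and $\g_2$ can differ (only in the $c_{2,j}$'s, and by exactly $1$); the paper's conjugation trick is more mechanical but never needs to name $R_2$.

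Two caveats. First, you attribute $(R_1+R_2)(e^k)=e^2\wedge e^{k-2}$ to ``the degree-one instance of the hypothesis,'' but the hypothesis is only assumed for $k\ge2$; the relation must be derived, e.g.\ by applying $fd_1=d_2f$ to $e^1\wedge e^k\in\Lambda^2(\g_1)$ and comparing $e^1$-components, which is one line. (The paper has the same blind spot at $k=2$, where it applies $d_2=fd_1f$ to the $1$-form $x_1\wedge e^n$.) Second, your normalization of the $e^1\wedge e^n$ coefficient of $\w$ to $1$ is not cosmetic: the correction term $e^2\wedge e^n\wedge x_\beta$ comes from $D_1(e^{n+1})=e^n$ and does not scale with $\w$, and for $\w=c\,e^1\wedge e^n$ with $c\neq1$ the extended square genuinely fails to commute (take $\g_1=\m_0(n)$, $\g_2=\m_2(n)$ and $h=e^1\wedge e^{n+1}$: one side is $0$, the other is $(c+1)\,e^1\wedge e^2\wedge e^{n-1}$). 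Consequently rescaling $e_{n+1}$ cannot be a true ``without loss of generality'' reduction --- it changes which diagram is being discussed, since $f$ does not commute with the rescaling --- and what you (and the paper, silently) actually prove is the coefficient-$1$ case. This is a defect of the proposition's wording rather than of either proof: the cocycles arising in Lemma \ref{l:coolextensions}(b), which are the only ones used in the proof of Theorem \ref{t:second}, have $e^1\wedge e^n$ coefficient exactly $1$.
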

\begin{proof}
First, we observe that the degree of a homogeneous $k$-form is preserved by $f$. The assumption is that $d_1f=fd_2$ where $d_1$ and $d_2$ are the differentials of $\g_1$ and $\g_2$ respectively.
We are going to prove that the differential $\hat{d_1}$ of $\g_1(\w)$ and the differential $\hat{d_2}$ of $\g_2(f(\w))$ are conjugate by $f$.

Let $h=e^{n+1}\^x + y \in Z_k(\g_1(\w))$, where $x \in  \Lambda^{k-1}(e^1,\dots,e^n)$ and $y \in \Lambda^{k}(e^1,\dots,e^n)$. Furthermore, let  $x=e^1\^x_1 + e^2\^x_2 + x_3$ and $y=e^1\^y_1 + e^2\^y_2 + y_3$, where $x_1 \in  \Lambda^{k-2}(e^2,\dots,e^n), x_2 \in  \Lambda^{k-2}(e^3,\dots,e^n), x_3 \in  \Lambda^{k-1}(e^3,\dots,e^n)$ and $y_1 \in  \Lambda^{k-1}(e^2,\dots,e^n), y_2 \in  \Lambda^{k-1}(e^3,\dots,e^n), y_3 \in  \Lambda^{k}(e^3,\dots,e^n)$. The cocycle $\w$ has degree $n+1$ and has nonzero $e^1\^e^n$ component, hence, we write $\w=e^1\^e^n + e^2\^\w_2 + \w_3$, where $\w_2 \in \Span({e^{n-1}})$ and $\w_3 \in   \Lambda^{2}_{n+1}(e^3,\dots,e^n)$.

The image of $h$ is
\begin{align*}
f(h) &= f(e^1\^(x_1\^e^{n+1} + y_1) + e^2\^(x_2\^e^{n+1} + y_2) + x_3\^e^{n+1} + y_3)
\\&=e^1\^(x_1\^e^{n+1} + y_1)  + e^2\^(x_2\^e^{n+1} + y_2 + D_1(x_1\^e^{n+1} + y_1)) 
\\&\quad + x_3\^e^{n+1} + y_3
\\&=e^{n+1}\^f(x) + f(y) + e^2\^x_1\^e^n.
\end{align*}
The differential of $h$ is 
\[\hat{d_1}(h) =e^{n+1}\^d_1(x) + d_1(y) +  \w\^x \]
and the image of this is
\[
f(\hat{d_1}(h)) =e^{n+1}\^f(d_1(x)) + f(\w\^x) + f(d_1(y)) + e^2\^(R_1(x_1) + D_1(x_3))\^e^n.
\]
Using the assumption that $d_1$ and $d_2$ are conjugate by $f$ we can rewrite the last equation as follows:
\[
f(\hat{d_1}(h)) =e^{n+1}\^d_2(f(x)) + f(\w\^x) + d_2(f(y)) + e^2\^(R_1(x_1) + D_1(x_3))\^e^n.
\]

Now let us calculate the differential of the image of $h$.
\[
\hat{d_2}(f(h))=f(\w)\^f(x) + e^{n+1}\^d_2(f(x)) + d_2(f(y)) + e^2\^d_2(x_1\^e^n).
\]
If we manage to show that 
\begin{equation}\label{eq:dfg11}
f(\w)\^f(x) + f(\w\^x)  + e^2\^d_2(x_1\^e^n) + e^2\^(R_1(x_1) + D_1(x_3))\^e^n=0,
\end{equation}
then the proof is complete. We will break this expression into pieces and observe its validity.
In more details, we calculate 
\begin{align*}
f(\w)\^&f(x)=e^1\^(e^2\^(x_2\^e^n + D_1(x_1)\^e^n + x_1\^\w_2 + x_1\^e^{n-1}) + x_3\^e^n 
\\&+ x_1\^\w_3) +e^2\^(\w_2\^x_3 + x_2\^\w_3 + x_3\^e^{n-1} + D_1(x_1)\^\w_3) + x_3\^\w_3
\\f(\w&\^x)=e^1\^(e^2\^(x_2\^e^n + x_1\^\w_2) + x_3\^e^n + x_1\^\w_3)
\\&\qquad+e^2\^(\w_2\^x_3 + x_2\^\w_3 +  D_1(x_3\^e^n + x_1\^\w_3)) + x_3\^\w_3,
\end{align*}
and thus we have
\begin{align*}
f(\w)\^f(x) + f(\w\^x) &= e^1\^e^2\^D_1(x_1)\^e^n + e^1\^e^2\^x_1\^e^{n-1} 
\\&+ e^2\^D_1(x_3)\^e^n + e^2\^x_1\^D_1(\w_3).
\end{align*}
Using Lemma \ref{lemma:D1Rvergne} for the cocycle $\w$, we obtain
\begin{align*}
&e^2\^d_2(x_1\^e^n) =e^2\^f(d_1(f(x_1\^e^n)))=e^2\^f(d_1(x_1\^e^n))
\\&= e^2\^f(x_1\^d_1(e^n)) + e^2\^f(d_1(x_1)\^e^n)
\\&=e^1\^e^2\^x_1\^e^{n-1} + e^1\^e^2\^D_1(x_1)\^e^n 
+ e^2\^x_1\^R_1(e^n) + e^2\^R_1(x_1)\^e^n
\\&=e^1\^e^2\^x_1\^e^{n-1} + e^1\^e^2\^D_1(x_1)\^e^n 
+ e^2\^R_1(x_1)\^e^n + e^2\^x_1\^D_1(\w_3).
\end{align*}
Substituting the above expressions into the left-hand side of \eqref{eq:dfg11} we get
\begin{align*}
&e^1\^e^2\^D_1(x_1)\^e^n + e^1\^e^2\^x_1\^e^{n-1} + e^2\^D_1(x_3)\^e^n + e^2\^x_1\^D_1(\w_3)
\\&+ e^2\^R_1(x_1)\^e^n+e^1\^e^2\^x_1\^e^{n-1} + e^1\^e^2\^D_1(x_1)\^e^n 
\\&+ e^2\^x_1\^D_1(\w_3)+e^2\^R_1(x_1) \^e^n + e^2\^D_1(x_3)\^e^n =0,
\end{align*}
which is obviously true. Thus, we have $\hat{d_1}f=f\hat{d_2}$ and the diagram commutes.
\end{proof}

\begin{proof}[Proof of Theorem \ref{t:second}]
Let $\g$ be a Lie algebra of Vergne type of dimension $n$. The only nilpotent Lie algebras of Vergne type of dimension $5$ over any field are $\m_0(5)$ and $\m_2(5)$ according to \cite{graaf}, so if $n=5$, the required result follows from Theorem \ref{m0m2}. Assume that $n \ge 6$ and choose a Vergne basis $e_1, e_2,\dots,e_n$ for $\g$. 
Then, by Lemma \ref{l:coolextensions}, there exists an $(n-1)$-dimensional Lie algebra of Vergne type $\g^1$ and a homogeneous $2$-cocycle $\w_1$ of $\g^1$ with nonzero $e^1\^e^{n-1}$ component such that $\g = \g^1(\w_1)$. In the same way, there exists an $(n-2)$-dimensional Lie algebra of Vergne type $\g^2$ and a homogeneous $2$-cocycle $\w_2$ of $\g^2$ with nonzero $e^1\^e^{n-2}$ component such that $\g^1 = \g^2(\w_2)$. If we keep going backwards like this, we will reach a point where there exists a homogeneous $2$-cocycle $\w_{n-5}$ of a Lie algebra $\g^{n-5}$ with nonzero $e^1\^e^5$ component such that $\g^{n-6} = \g^{n-5}(\w_{n-5})$. This sequence of $2$-cocyles $(\w_1,\w_2,\dots,\w_{n-5})$ characterizes $\g$.

Now we construct a non-isomorphic algebra of Vergne type which has the same Betti numbers as $\g$. In order to do so, note that $\g^{n-5}$ is isomorphic to either $\m_0(5)$ or $\m_2(5)$. Let $\h^{n-5}$ be the other of the two algebras. We start with $\h^{n-5}$ and extend it by $f(\w_{n-5})$, where $f$ is the involution from Definition \ref{def:involution}. Denote $\h^{n-i} =\h^{n-i+1}(f(\w_{n-i+1}))$ for $i=5,\dots,n$. After applying $n-5$ times Proposition \ref{p:extensions}, we come to the conclusion that $\h^0$ and $\g$ have the same Betti numbers.

These algebras are not isomorphic, because the algebras that we started with, $\m_0(5)$ and $\m_2(5)$, are not isomorphic; this is due to the fact that $\m_0(5)$ has a codimension $1$ abelian ideal while there is no such an ideal for $\m_2(5)$.
\end{proof}
\section{Final remarks}
Figure \ref{tree} gives all the Lie algebras of Vergne type over $\z2$ up to dimension $12$. These computations were performed using \texttt{Maple}. The edges of this graph connect two algebras where the algebra on the bottom end is a central extension of the algebra on the upper end. The first number in the label of an algebra is its dimension and the second one is an increasing index.

Each of them is a $1$-dimensional central extension of a Lie algebra of Vergne type as described in Lemma \ref{l:coolextensions}. According to Theorem \ref{m0m2}, for any $n\ge 5$, we have that $\m_0(n)$ and $\m_2(n)$ have the same Betti numbers and the corresponding diagram commutes. Moreover, according to Proposition \ref{p:extensions} or Theorem \ref{t:second}, the diagram of $g(7,1)$ and $h(7,1)$ also commutes, hence, these algebras have the same Betti numbers; this is because they are central extensions of $\m_0(6)$ and $\m_2(6)$ for which the corresponding diagram commutes.

In general, to find the pairs of these algebras that have the same Betti numbers, we follow the same steps as in the proof of Theorem \ref{t:second}. For example, $\m_0(8)$ has two central extensions which are Lie algebras of Vergne type. If we choose the cocycle which has zero $e^2\^e^7$ component, then we pair this extension with the extension of $\m_2(8)$ with the cocycle of degree $9$ and nonzero $e^2\^e^7$ component; this follows from Proposition~2 and the definition of $f$. Based on this, we labelled them in such a way that $g(n,i)$ has the same Betti numbers as $h(n,i)$.

In the following, we give the full description of all Lie algebras of Vergne type over $\z2$ up to dimension $12$. In order to do so in an efficient way, we give only the nonzero relations which involve $e_2$. One can obtain the remaining bracket relations from the Jacobi identity. For an $n$-dimensional Lie algebra of Vergne type, we denote by $[0,c_{2,3},c_{2,4},\dots,c_{2,n-2},0,0]$ the relations $[e_2,e_3]=c_{2,3}e_5,\dots, [e_2,e_{n-2}]=c_{2,n-2}e_n$, where $c_{2,3},\dots,c_{2,n-2} \in \z2$. For example, the algebra $g(8,1)$ with relations $[0, 0, 0, 1, 0, 0, 0]$ is the Lie algebra of Vergne type with relations $[e_1,e_i]=e_{i+1}$, for $2\leq i \leq 7$ and $[e_2,e_5]=e_7, [e_3,e_5]=e_8, [e_3,e_4]=e_7$.

\begin{figure}[H]
    \centering
    \def\svgwidth{\columnwidth}
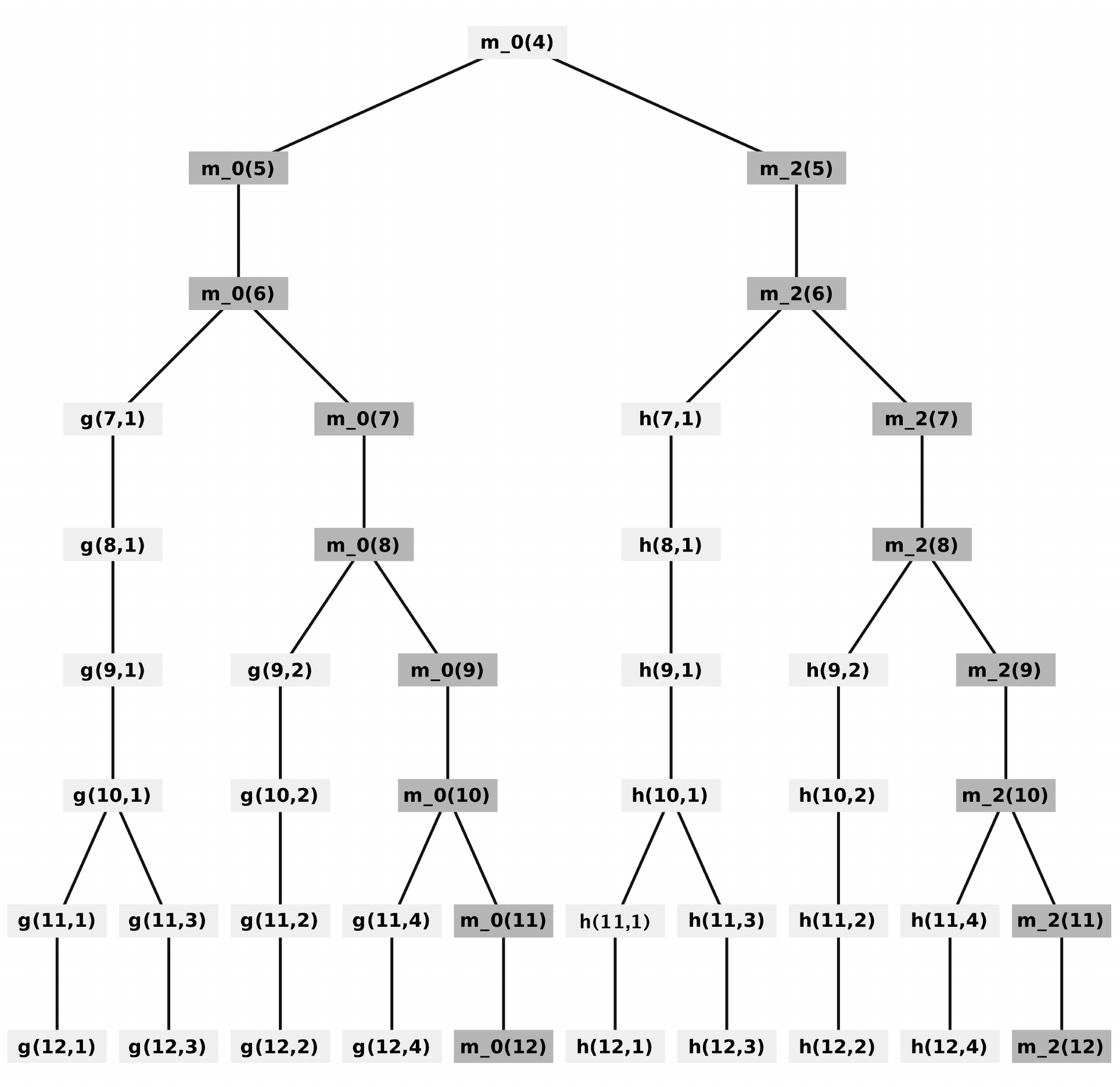
    \caption{Lie algebras of Vergne type of dimension $n \leq 12$.}
    \label{tree}
\end{figure}
\begin{itemize}
\begin{adjmulticols}{2}{0pt}{-30pt}
\item[] $g$(7,1): [0, 0, 0, 1, 0, 0]
\item[] 
\item[] $g$(8,1): [0, 0, 0, 1, 0, 0, 0]
\item[] 
\item[] $g$(9,1): [0, 0, 0, 1, 0, 0, 0, 0]
\item[] $g$(9,2): [0, 0, 0, 0, 0, 1, 0, 0]
\item[] 
\item[] $g$(10,1): [0, 0, 0, 1, 0, 0, 1, 0, 0]
\item[] $g$(10,2): [0, 0, 0, 0, 0, 1, 1, 0, 0]
\item[] 
\item[] $g$(11,1): [0, 0, 0, 1, 0, 0, 1, 0, 0, 0]
\item[] $g$(11,2): [0, 0, 0, 0, 0, 1, 1, 0, 0, 0]
\item[] $g$(11,3): [0, 0, 0, 1, 0, 0, 1, 1, 0, 0]
\item[] $g$(11,4): [0, 0, 0, 0, 0, 0, 0, 1, 0, 0]
\item[] 
\item[] $g$(12,1): [0, 0, 0, 1, 0, 0, 1, 0, 0, 0, 0]
\item[] $g$(12,2): [0, 0, 0, 0, 0, 1, 1, 0, 0, 0, 0]
\item[] $g$(12,3): [0, 0, 0, 1, 0, 0, 1, 1, 0, 0, 0]
\item[] $g$(12,4): [0, 0, 0, 0, 0, 0, 0, 1, 0, 0, 0]
\item[] $h$(7,1): [0, 1, 1, 0, 0, 0]
\item[] 
\item[] $h$(8,1): [0, 1, 1, 0, 1, 0, 0]
\item[] 
\item[] $h$(9,1): [0, 1, 1, 0, 1, 1, 0, 0]
\item[] $h$(9,2): [0, 1, 1, 1, 1, 0, 0, 0]
\item[] 
\item[] $h$(10,1): [0, 1, 1, 0, 1, 1, 0, 0, 0]
\item[] $h$(10,2): [0, 1, 1, 1, 1, 0, 0, 0, 0]
\item[] 
\item[] $h$(11,1): [0, 1, 1, 0, 1, 1, 0, 1, 0, 0]
\item[] $h$(11,2): [0, 1, 1, 1, 1, 0, 0, 1, 0, 0]
\item[] $h$(11,3): [0, 1, 1, 0, 1, 1, 0, 0, 0, 0]
\item[] $h$(11,4): [0, 1, 1, 1, 1, 1, 1, 0, 0, 0]
\item[] 
\item[] $h$(12,1): [0, 1, 1, 0, 1, 1, 0, 1, 1, 0, 0]
\item[] $h$(12,2): [0, 1, 1, 1, 1, 0, 0, 1, 1, 0, 0]
\item[] $h$(12,3): [0, 1, 1, 0, 1, 1, 0, 0, 1, 0, 0]
\item[] $h$(12,4): [0, 1, 1, 1, 1, 1, 1, 0, 1, 0, 0]
\end{adjmulticols}
\end{itemize}

\section*{Acknowledgement}
The author wishes to express his most sincere gratitude to his PhD supervisors, Grant Cairns and Yuri Nikolayevsky, for their inspiring guidance and support.

	\bibliographystyle{amsplain}
	\bibliography{sameleaves}

\end{document}